\newtheorem{theorem}{Theorem}[section]
\newtheorem{corollary}[theorem]{Corollary}
\theoremstyle{definition}
\theoremstyle{remark}
\numberwithin{equation}{section}
\newcommand{\be}{\begin{equation}}
\newcommand{\ee}{\end{equation}}
\newcommand{\Fp}{\mathbb{F}_p}
\newcommand{\Z}{\mathbb{Z}}
\newcommand{\EFp}{E(\Fp)}
\newcommand{\Q}{\mathbb{Q}}
\begin{document}

\title{The quadratic character of $1+\sqrt{2}$ and an elliptic curve}


\author{Yu Tsumura}
\address{Department of Mathematics, Purdue University
150 North University Street, West Lafayette, Indiana 47907-2067
}

\email{ytsumura@math.purdue.edu}
\thanks{}

\subjclass[2010]{Primary 11G20; Secondary 11E25}

\date{}

\dedicatory{}

\begin{abstract}
When $p\equiv 1 \pmod 8$, we have a criterion of the quadratic character of $1+\sqrt{2}$, which is related to the class number of $\Q(\sqrt{-p})$.
In this paper, we obtain a similar criterion  using an elliptic curve, which contrasts to the proof using algebraic number theory for the old one.

\end{abstract}

\maketitle


\section{Introduction.}

Let $p\equiv 1 \pmod 8$ be a prime number.
Then by the quadratic reciprocity law, $2$ is a square mod $p$, hence there exists $\sqrt{2}\in\Fp$.
Also we know that $p\equiv 1 \pmod 8$ can be expressed as $p=c^2+8d^2$, where $c, d$ are integers.

In1969, Pierre Barrucand and Harvey Cohn provided the criterion of quadratic character of $1+\sqrt{2}$.
Let $\left(\frac{*}{p}\right)$ be the Jacobi symbol mod $p$. Then they proved the following.

\begin{theorem}
Let $p=c^2+8d^2$ be a prime number, where $c, d$ are integers.
Then we have
\[\left(\frac{1+\sqrt{2}}{p}\right)=1 \Longleftrightarrow d\equiv 0 \pmod 2   \Longleftrightarrow   h(-4p) \equiv 0 \pmod 8,\]
where $h(-4p)$ denotes the class number of $\Q(\sqrt{-p})$.
\end{theorem}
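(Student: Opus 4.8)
The plan is to read $\left(\frac{1+\sqrt 2}{p}\right)$ off the $2$-primary part of $\EFp$ for the elliptic curve
\[
E:\quad y^2=x^3+4x^2+2x=x(x^2+4x+2),
\]
which has complex multiplication by the maximal order $\Z[\sqrt{-2}]$, conductor a power of $2$, and full $2$-torsion with $x$-coordinates $0$ and $-2\pm\sqrt 2$, so that $E[2]\subset\EFp$ precisely because $\sqrt 2\in\Fp$. Alongside $E$ I would carry its quadratic twist $E^{(q)}$ by a fixed nonsquare $q$ mod $p$ (note that $-1,2,-2$ are all squares mod $p$, so $q$ must be chosen away from these).

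\emph{Point counting via complex multiplication.} Since $p\equiv 1\pmod 8$ splits in $\Z[\sqrt{-2}]$, the representation $p=c^2+8d^2=c^2+2(2d)^2$ corresponds to a factorization $p=\pi\overline\pi$ with $\pi=c+2d\sqrt{-2}$. Because the CM order is maximal and $E$ has good reduction at $p$, one has an isomorphism of abelian groups $\EFp\cong\Z[\sqrt{-2}]/(\pi-1)$, and likewise $E^{(q)}(\Fp)\cong\Z[\sqrt{-2}]/(\pi+1)$ since twisting negates Frobenius. Thus $\#\EFp=(c-1)^2+8d^2$ and $\#E^{(q)}(\Fp)=(c+1)^2+8d^2$, and the $2$-Sylow subgroups are governed by the highest power of $\sqrt{-2}$ dividing $(c\mp 1)+2d\sqrt{-2}$ in $\Z[\sqrt{-2}]$. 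A direct computation then gives $E[4]\subset\EFp$ iff $c\equiv 1\pmod 4$ and $d$ is even, and $E^{(q)}[4]\subset E^{(q)}(\Fp)$ iff $c\equiv 3\pmod 4$ and $d$ is even; since $c$ is odd, the disjunction of these two conditions is simply that $d$ is even. Crucially this holds whatever the (delicate) sign normalization of $\pi$ turns out to be, so I never have to determine the Hecke character of $E$ at $2$.

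\emph{$2$-descent.} For $E:y^2=(x-e_1)(x-e_2)(x-e_3)$ the point $(e_i,0)$ lies in $2\EFp$ iff $e_i-e_j$ is a square in $\Fp$ for both $j\neq i$. Here the differences in play are $2\pm\sqrt 2$ and $2\sqrt 2$, and using $\left(\frac 2p\right)=1$ together with the identity $(1+\sqrt 2)(\sqrt 2-1)=1$, which gives $2-\sqrt 2=\sqrt 2\,(1+\sqrt 2)^{-1}$, every such square condition reduces to a statement in the two symbols $s:=\left(\frac{\sqrt 2}{p}\right)$ and $t:=\left(\frac{1+\sqrt 2}{p}\right)$. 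One finds $E[2]\subset 2\EFp$, equivalently $E[4]\subset\EFp$, iff $s=t=1$, and the same calculation for the twist, where the extra factor $q$ flips $s$, gives $E^{(q)}[4]\subset E^{(q)}(\Fp)$ iff $s=-1$ and $t=1$. Hence ``$E[4]\subset\EFp$ or $E^{(q)}[4]\subset E^{(q)}(\Fp)$'' is equivalent to $t=1$, the nuisance symbol $s=\left(\frac 2p\right)_4$ cancelling out of the disjunction.

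Combining the two halves yields $\left(\frac{1+\sqrt 2}{p}\right)=1\iff d\equiv 0\pmod 2$. For the remaining equivalence $d\equiv 0\pmod 2\iff 8\mid h(-4p)$ I would simply cite the classical genus-theoretic criterion (R\'edei--Reichardt) for divisibility by $8$ of the class number of discriminant $-4p$, which is the purely number-theoretic half already present in Barrucand--Cohn. I expect the main difficulty to lie exactly where the two halves meet: making the descent bookkeeping watertight for $E$ \emph{and} its twist at once, and matching it cleanly against the $2$-Sylow structure produced by complex multiplication, so that the cancellation of $s$ is a genuine theorem rather than a numerical coincidence; pinning down the precise normalization of the Hecke character of $E$ at the prime $2$ would be a secondary obstacle, which the twist trick is meant to circumvent.
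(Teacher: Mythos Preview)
Your argument is correct, but you should know that the paper does not actually prove this theorem: its ``proof'' is a bare citation to Barrucand--Cohn and to Lemmermeyer. The paper's own elliptic-curve contribution is the companion Theorem~2.1, proved via the curve $y^2=x^3-x$ with complex multiplication by $\Z[i]$, and that yields a criterion in terms of the representation $p=a^2+b^2$, not $p=c^2+8d^2$.

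Your route is thus genuinely different from both the cited classical proof and the paper's own method. By choosing a curve with CM by $\Z[\sqrt{-2}]$ you land directly on the decomposition $p=c^2+8d^2$, so the equivalence $\left(\frac{1+\sqrt 2}{p}\right)=1\iff d\equiv 0\pmod 2$ falls out without any detour through $a,b$ and without invoking Barrucand--Cohn for that half. The twist device---pairing $E$ with $E^{(q)}$ so that the biquadratic symbol $s=\left(\frac{\sqrt 2}{p}\right)$ cancels from the disjunction and the sign normalization of Frobenius becomes irrelevant---has no analogue in the paper; there the author works with the single curve $y^2=x^3-x$, writes down the degree-$2$ endomorphism $\eta=1+i$ explicitly, and reads the criterion off from whether the order-$32$ subgroup $\eta^{-5}(\infty)$ lies in $\EFp$. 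What the paper's approach buys is concreteness: explicit addition formulas for $\eta$ and no appeal to the Lenstra-type structure theorem $E(\Fp)\cong\mathcal{O}/(\pi-1)$. What yours buys is a direct hit on the $c,d$ parameters, so you actually prove the first equivalence of Theorem~1.1 itself rather than a parallel statement requiring Corollary~2.2 to reconcile. For the class-number half you cite out to genus theory, exactly as the paper does.
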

\begin{proof}
See \cite{Barrucand-Cohn} and Proposition 5.15 in \cite{Lemmermeyer}.
\end{proof}

This was proved by the method of algebraic number theory.
In this article, we provide  a slightly different criterion with a different taste proof using properties of an elliptic curve defined by $E: y^2=x^3-x$.
This proof is simple and more accessible than the old one for those who are familiar with the basics of elliptic curves.

\section{A new criterion}
Again let $p\equiv 1 \pmod 8$ be a prime number.
Then we can write $p=a^2+b^2$, where $ a, b$ are integers.
Further we can assume that $a$ is odd, $b$ is even and $a+b\equiv1 \pmod 4$.
Then we show the following.

\begin{theorem}
Let notation be as above.
Then we have
\[ \left(\frac{1+\sqrt{2}}{p}\right)=1 \Longleftrightarrow (a-1)^2+b^2\equiv 0 \pmod{32}\]
\end{theorem}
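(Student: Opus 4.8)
The plan is to identify the integer $(a-1)^2+b^2$ with the order $\#\EFp$ of the group of $\Fp$-points of $E\colon y^2=x^3-x$, and then to read off the quadratic character of $1+\sqrt{2}$ from the $2$-power part of that group. For the first identification, recall the classical Jacobi-sum count (see e.g. Ireland--Rosen, \emph{A Classical Introduction to Modern Number Theory}, Ch.~18, or Silverman, \emph{The Arithmetic of Elliptic Curves}, Ch.~V): for $p\equiv1\pmod4$ one has $\#\EFp=p+1-2a$, where $p=a^2+b^2$ with $a$ odd, $b$ even and $a+b\equiv1\pmod4$ --- precisely the normalization fixed in the statement. Hence $\#\EFp=a^2+b^2+1-2a=(a-1)^2+b^2$, and the theorem becomes
\[ \left(\tfrac{1+\sqrt{2}}{p}\right)=1\iff 32\mid\#\EFp. \]

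Next I would show $E[4]\subseteq\EFp$. As $E\colon y^2=x(x-1)(x+1)$, the $2$-torsion points $T_1=(0,0)$, $T_2=(1,0)$, $T_3=(-1,0)$ are rational; and using the $4$-division polynomial one checks that the points of $E[4]$ lying above $T_1,T_2,T_3$ have $x$-coordinates $\pm i$, $1\pm\sqrt{2}$, $-1\mp\sqrt{2}$ (with $i^2=-1$), while in each case $x^3-x$ equals $\pm2i$ or $\pm2(1\pm\sqrt2)^2$. Since $p\equiv1\pmod8$ we have $i,\sqrt2\in\Fp$ and $-1$, $2$, $i$ are all squares mod $p$, so all these points lie in $\EFp$. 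Thus $\EFp\cong\Z/m\Z\times\Z/n\Z$ with $4\mid m\mid n$; writing $v_2$ for the $2$-adic valuation, $v_2(\#\EFp)=v_2(m)+v_2(n)$ with $2\le v_2(m)\le v_2(n)$, so $32\mid\#\EFp\iff v_2(n)\ge3\iff\EFp$ has a point of order $8\iff$ some point of order $4$ lies in $2\EFp$. The order-$4$ points are exactly the halves of $T_1,T_2,T_3$.

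To decide when a half of some $T_j$ lies in $2\EFp$, I would invoke the complete $2$-descent homomorphism (Silverman, \emph{AEC}, Ch.~X): the injective map $\delta\colon\EFp/2\EFp\hookrightarrow \Fp^{\times}/(\Fp^{\times})^2\times\Fp^{\times}/(\Fp^{\times})^2$ taking a non-$2$-torsion point $(x_0,y_0)$ to $([x_0],[x_0-1])$ (here $e_1=0$, $e_2=1$, and $[u]$ is the class of $u$), with the usual special values on $2$-torsion. Using $[-1]=[2]=1$ one finds $\delta(T_1)=\delta(T_2)=\delta(T_3)=(1,1)$, so $E[2]\subseteq2\EFp$; hence the four halves of a fixed $T_j$ share one image $c_j\in\EFp/2\EFp$, and $c_1+c_2=c_3$ since $T_1+T_2=T_3$. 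Feeding the $x$-coordinates above into $\delta$ and simplifying with $[i]=[-1]=[2]=1$ gives
\[ \delta(c_1)=(1,\,[1+\sqrt{2}]),\qquad \delta(c_2)=([1+\sqrt{2}],\,[\sqrt{2}]),\qquad \delta(c_3)=([1+\sqrt{2}],\,[\sqrt{2}\,(1+\sqrt{2})]), \]
with $\delta(c_1)+\delta(c_2)=\delta(c_3)$ automatic. Now: if $1+\sqrt{2}$ is a square mod $p$, then $\delta(c_1)=(1,1)$, so $c_1=0$ by injectivity, so $\EFp$ has a point of order $8$ and $32\mid\#\EFp$; conversely, if $32\mid\#\EFp$, then $\delta(c_j)=(1,1)$ for some $j$, and inspecting the three displayed vectors (the second coordinate if $j=1$, the first coordinate if $j=2,3$) forces $[1+\sqrt2]=1$. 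Combined with the first paragraph, this is the theorem.

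The one genuinely delicate step is the first: pinning down the \emph{sign} in $\#\EFp=p+1-2a$ and showing that the normalization ``$a$ odd, $b$ even, $a+b\equiv1\pmod4$'' makes $2a$ well defined and equal to the trace of Frobenius. This is the usual Jacobi-sum bookkeeping; equivalently, one may use that the Frobenius is the primary generator $\pi=a+bi$ of the corresponding ideal of $\Z[i]$, whence $\#\EFp=N(\pi-1)=(a-1)^2+b^2$ at once (this CM viewpoint also makes Step~2 immediate via $\EFp\cong\Z[i]/(\pi-1)$). Everything after that --- the explicit $4$-torsion and the descent computation --- is routine.
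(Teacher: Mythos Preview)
Your argument is correct, and it reaches the same conclusion as the paper via a genuinely different descent.

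Both proofs start from the same elliptic curve $E:y^2=x^3-x$, the identification $\#E(\Fp)=(a-1)^2+b^2$, and the observation that the relevant torsion has $x$-coordinates $\pm i$ and $\pm1\pm\sqrt2$. The divergence is in how one detects the extra factor of $2$ in $\#E(\Fp)$. The paper works with the complex-multiplication endomorphism $\eta=1+i$ of degree $2$: it proves the clean criterion that $(x_0,y_0)\in E(\Fp)$ lies in $\eta\cdot E(\Fp)$ if and only if $x_0$ is a square, and then shows that $\eta^{-5}(\infty)\subset E(\Fp)$ (equivalently $32\mid\#E(\Fp)$) exactly when the $x$-coordinates $\pm1\pm\sqrt2$ of $\eta^{-4}(\infty)$ are squares. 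You instead run the standard $2$-descent map $\delta:E(\Fp)/2E(\Fp)\hookrightarrow(\Fp^{\times}/\Fp^{\times2})^2$, verify $E[2]\subset 2E(\Fp)$, and read off from the images of the three cosets $c_1,c_2,c_3$ of order-$4$ points that some $c_j$ vanishes if and only if $[1+\sqrt2]=1$.

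What each buys: the paper's $\eta$-criterion is a single Legendre-symbol test and exploits the CM structure directly; it is short and elegant but special to this curve. Your $2$-descent is the textbook mechanism, requires no CM input, and makes the various square classes $[\sqrt2]$, $[1+\sqrt2]$ appear transparently; it also yields as a by-product the identity $[\,i-1\,]=[\,1+\sqrt2\,]$ in $\Fp^{\times}/\Fp^{\times2}$ (your stated value of $\delta(c_1)$), which is perhaps not obvious from $[i]=[-1]=[2]=1$ alone but follows immediately from your relation $\delta(c_1)=\delta(c_2)+\delta(c_3)$.
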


\begin{proof}
Let $\EFp$ be an elliptic curve defined by $y^2=x^3-x$ over a finite field $\Fp$.
We know that $\#\EFp=(a-1)^2+b^2$.
(See Theorem 4.23, page 115 in \cite{Washington}.)

Since $p\equiv1 \pmod 4$, $-1$ is a square mod $p$ by the quadratic reciprocity law.
So let $i\in \Fp$ be a square root of $-1$.
Then the action of $i$ on a point $(x, y) \in \EFp$ defined by $[i]\cdot (x, y)=(-x, iy)$ is easily seen to be a homomorphism.
Actually, $\EFp$ has complex multiplication by $\Z[i]$. 
(See Example 10.2 in \cite{Washington}.)
Let us denote $\eta=1+i$.
We describe the action of $\eta$ explicitly.
Let $\eta\cdot(x,y)=(x_0,y_0)$. 
We have 
\begin{align*}
\eta\cdot(x,y)&=[1+i]\cdot(x,y)=(x,y)+[i]\cdot(x,y)=(x,y)+(-x,iy).
\end{align*}
Here $i$ in the $y$-coordinate is a square root of $-1$ in $\Fp$.
Then by the elliptic curve addition, it is equal to
\begin{equation}\label{eq:eta1}
\left( \left(\frac{(1-i)y}{2x}\right)^2, y_0 \right)
\end{equation}
\begin{equation}\label{eq:eta2}
=\left(\frac{x^2-m}{2ix} ,y_0 \right),
\end{equation}
where $y_0=\left(\frac{(1-i)y}{2x} \right) (x-x_0)-y$.
Note that by the  equation (\ref{eq:eta1}), the $x$-coordinate $x_0$ of  $\eta \cdot(x,y)$ is a square.
Also by the equation (\ref{eq:eta2}), we have $\deg(\eta)=2$, hence it is easy to see that $\ker(\eta)=\{ \infty, (0, 0) \}$, where $\infty$ denotes the point at infinity (the identity).

Let $x(S)$, $S\subset \EFp$ be the set of $x$-coordinate of $(x, y)\in S\setminus \{\infty\}$.
Now, using (\ref{eq:eta2}), we see that $x(\eta^{-1}(\infty))=\{ 0\}$, $x(\eta^{-2}(\infty))=\{\pm1\}$, $x(\eta^{-3}(\infty))=\{\pm i\}$,
$x(\eta^{-4}(\infty))=\{\pm 1 \pm \sqrt{2}\}$.

Now we prove a claim that $(x_0, y_0)\in \EFp$ has a preimage by $\eta$ in $\EFp$ if and only if $x_0$ is a square mod $\Fp$.
Suppose $x_0$ is a square mod $p$, then we solve equation (\ref{eq:eta2}).
Comparing $x$-coordinate we have $x_0=(x^2-m)/(2ix)$.
Solving for $x$, we have $x=x_0i+\sqrt{m-x_0^2}$.
Since $\left( \frac{x_0}{p} \right)=1$, we have 
\[\left(\frac{m-x_0^2}{p}\right)=\left( \frac{-y_0^2x_0^{-1}}{p} \right)=\left( \frac{-1}{p} \right)\left( \frac{y_0^2}{p} \right)\left( \frac{x_0}{p}\right)=1.\]
Hence $\sqrt{m-x_0}\in \Fp$ and therefore, we have $x\in \Fp$.
Now comparing $y$-coordinate, we have $y=y_0 \left\{\left(\frac{1-i}{2x} \right) (x-x_0)-1 \right\}^{-1}\in \Fp$.
Then we have $\eta \cdot (x, y)=(x_0, y_0)$ for $(x, y)\in \EFp $.

The converse follows from the equation (\ref{eq:eta1}).

We finish the consideration of the action of $\eta$ and now we move to prove the theorem.
Suppose $\left(\frac{1+\sqrt{2}}{p}\right)=1$.
Then since
\begin{equation}\label{eq:Jac}
\left(\frac{1+\sqrt{2}}{p}\right)\left(\frac{1-\sqrt{2}}{p}\right)=\left(\frac{-1}{p}\right)=1,
 \end{equation}
  we have $\left(\frac{1-\sqrt{2}}{p}\right)=1$.
Hence all points whose $x$-coordinate is one of $\pm1\pm \sqrt{2}$ have preimages by  $\eta$ in $\EFp$ by the above claim.
Hence $\eta^{-5}(\infty)$ is a subgroup of $\EFp$.
Since $\#\eta^{-5}(\infty)=32$ (note that $\#\ker(\eta)=2$), we have $32|\#\EFp=(a-1)^2+b^2$, hence we have $(a-1)^2+b^2\equiv 0 \pmod{32}$.

Conversely, suppose  $(a-1)^2+b^2\equiv 0 \pmod{32}$.
Since $32|\#\EFp=(a-1)^2+b^2$ and $\EFp$ is in general cyclic or a product of two cyclic groups, 
we have an element $P\in\EFp$ of order $8$.
Then the $x$-coordinate of $\eta(P)$ or $\eta^2(P)$ is one of $\pm 1 \pm \sqrt{2}$, say it is $Q$.
Hence $Q$ has a preimage of $\eta$.
As we saw in the above claim, this means that the $x$-coordinate of $Q$ is a square mod $p$.
So one of $\pm 1 \pm \sqrt{2}$ is a square.
Hence by (\ref{eq:Jac}), all of them are squares.
Especially, we have $\left(\frac{1+\sqrt{2}}{p}\right)=1$.

\end{proof}

Comparing these two theorems, we immediately get the following, which seems to difficult to prove elementarily.

\begin{corollary}

Let $p\equiv 1\pmod 8$ be a prime number.
Let $p=a^2+b^2=c^2+8d^2$, where $a,b,c,d$ are integers and $a$ is odd, $b$ is even and $a+b\equiv 1\pmod4$.
Then 
\[ (a-1)^2+b^2\equiv 0 \pmod{32} \Longleftrightarrow  d\equiv 0 \pmod 2.\]

\end{corollary}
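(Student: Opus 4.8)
The plan is to obtain the corollary by chaining the two criteria just proved, using $\left(\frac{1+\sqrt{2}}{p}\right)$ as the common intermediary. First I would invoke Theorem~1.1 with the representation $p=c^2+8d^2$: it gives $\left(\frac{1+\sqrt{2}}{p}\right)=1 \Longleftrightarrow d\equiv 0\pmod 2$. Then I would invoke Theorem~2.1 with the representation $p=a^2+b^2$ (with $a$ odd, $b$ even, $a+b\equiv 1\pmod 4$): it gives $\left(\frac{1+\sqrt{2}}{p}\right)=1 \Longleftrightarrow (a-1)^2+b^2\equiv 0\pmod{32}$. Transitivity of ``if and only if'' then immediately yields the claimed equivalence, since both sides are characterizations of the same condition on $p$.

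Before that, I would record why both congruence conditions are well posed. The representation $p=c^2+8d^2$ is unique up to the signs of $c$ and $d$, so the parity of $d$ is an invariant of $p$; similarly $p=a^2+b^2$ with $a$ odd and $b$ even is unique up to signs, and one checks that the normalization $a+b\equiv 1\pmod 4$ is preserved by $b\mapsto -b$ but destroyed by flipping the sign of $a$, so $(a-1)^2+b^2$ (which sees $b$ only through $b^2$) is a well-defined integer attached to $p$, and a fortiori well defined modulo $32$. Recall also that in the proof of Theorem~2.1 this integer is exactly $\#\EFp$, which explains the shape of the left-hand side. With these remarks the two invocations above fit together cleanly.

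I do not expect a genuine obstacle in the argument itself --- it is essentially two lines once Theorems~1.1 and~2.1 are granted. The point worth emphasizing is that the ``hard part'' has been outsourced: there seems to be no direct elementary manipulation of the two binary quadratic form identities $p=a^2+b^2=c^2+8d^2$ that produces this congruence relation, and the proof instead routes through the quadratic character of $1+\sqrt{2}$, i.e.\ through $h(-4p)$ on one side and $\#\EFp$ on the other. So if one wanted an ``honest'' elementary proof, that is where the work would lie; passing through these two theorems sidesteps it entirely.
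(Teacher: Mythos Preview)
Your proposal is correct and matches the paper's approach exactly: the paper simply says the corollary follows immediately by comparing Theorems~1.1 and~2.1, which is precisely the chaining via $\left(\frac{1+\sqrt{2}}{p}\right)$ that you describe. Your added well-definedness remarks go slightly beyond what the paper spells out, but they are accurate and harmless.
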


\bibliographystyle{amsplain}
\bibliography{reciprocity}

\end{document}